\DeclareMathOperator{\vol}{vol}
\DeclareMathOperator{\Tr}{Tr}
\renewcommand{\epsilon}{\varepsilon}
\newtheorem{theorem}{Theorem}
\newtheorem{proposition}[theorem]{Proposition}
\theoremstyle{definition}
\newcommand{\normiii}[1]{{\left\vert\kern-0.25ex\left\vert\kern-0.25ex\left\vert #1 
    \right\vert\kern-0.25ex\right\vert\kern-0.25ex\right\vert}}
    \def\R {\mathbb{R}}
       \newcommand{\N}{\mathbb{N}}
\title[The critical case for the concentration of eigenfunctions]{The critical case for the concentration of eigenfunctions on singular Riemannian manifolds}
\author{Charlotte Dietze}
\address[Charlotte Dietze]{Sorbonne Université, CNRS, Laboratoire Jacques-Louis Lions,
4 place Jussieu,
75005 Paris,
France
}
 \email{Charlotte.Dietze@sorbonne-universite.fr}
 	\subjclass[2010]{Primary: 58C40; Secondary: 53C17, 35P20}
\keywords{Grushin, sub-Riemannian geometry, Weyl's law, singular Riemannian metric, gas giants}
\begin{document}
\maketitle
\begin{abstract}
We consider a compact Riemannian manifold with boundary with a certain class of critical singular Riemannian metrics that are singular at the boundary. The corresponding Laplace-Beltrami operator can be seen as a Grushin-type operator plus a potential. 
We show in the critical case that the average density of eigenfunctions for the Laplace-Beltrami operator with eigenvalues below $\lambda>0$ is distributed over all length scales between $\lambda^{-1/2}$ and $1$ near the boundary. We give a precise description of this distribution as $\lambda\to\infty$.
\end{abstract}

\section{Introduction}
Let $X$ be an $(n+1)$-dimensional compact smooth Riemannian manifold with boundary, $n\in\N$. We consider singular Riemannian metrics $g$ on $X$ that are smooth and non-degenerate in the interior of $X$ and have a certain critical singularity near the boundary.

\medskip 

For a suitable choice of coordinates near the boundary, where $x$ is a transverse coordinate to the boundary, we can identify the manifold $X$ near its boundary with $[0,1]\times M$, where $M$ is a smooth manifold of dimension $n$ that corresponds to the boundary manifold. Here $\{0\}\times M$ is identified with the boundary $\partial X$. We will also use, for any $\epsilon>0$, the notation $X_\epsilon$ for the subset of $X$ that is identified with $[0,\epsilon]\times M$.

\subsection*{Definition of the singular Riemannian metric $g$}
For any family $\{g_1(x)\}_{x\in [0,1]}$ of smooth non-degenerate Riemannian metrics $g_1(x)$ on $M$ that depend continuously on the parameter $x\in [0,1]$, we consider singular Riemannian metrics $g$ on $X$ that are smooth and non-singular inside $X$, and are of the form
\begin{equation}\label{eq:g_def}
g= \mathrm{d}x^2+x^{-\beta}g_1(x) \quad \text{on } X_1\cong [0,1]\times M\,.
\end{equation}
Here $\beta\in(0,\infty)$ is a parameter modelling the singularity of $g$ near the boundary. In this paper, we will focus on the case of a critical singularity, that is, $\beta=2/n$.

\medskip

We denote the Laplace-Beltrami operator with respect to $g$ on $X$ with Dirichlet boundary conditions by $\Delta_g$. We use the sign convention that $\Delta_g$ is a non-negative operator.

\subsection*{$\Delta_g$ as a $\beta$-Grushin-type operator plus a potential}
The operator $\Delta_g$ is a model example for an operator in sub-Riemannian geometry. More precisely, $\Delta_g$ can be seen as a $\beta$-Grushin-type operator plus a potential: For $\epsilon_0>0$ small, the metric $g$ is quasi-isometric with a constant close to $1$ depending on $\epsilon_0$ to a metric $\tilde g $ on $X$, where $\tilde g$ satisfies
\begin{equation}\label{eq:g_def}
\tilde g= \mathrm{d}x^2+x^{-\beta}g_1(0) \quad \text{on } X_{\epsilon_0}\cong [0,\epsilon_0]\times M\,.
\end{equation}
On $X_{\epsilon_0}$, we have
\begin{equation}\label{eq:LB_near_boundary_in_coord}
\Delta_{\tilde g}=-\partial_x^2+\frac{C_\beta}{x^2}+x^\beta\Delta_{g_1(0)}\, \text{ with } C_\beta=\frac{\beta n}{4}\left(\frac{\beta n}{4}+1\right)\,,
\end{equation}
where $\Delta_{g_1(0)}$ denotes the Laplace-Beltrami operator with respect to $g_1(0)$ on $M$, see \cite{yves}. The case $\beta=2$ corresponds to the classical Grushin operator 
\begin{equation}
\left(\partial_x\right)^2+\sum_{k=1}^n\left(x\partial_{y_k}\right)^2
\end{equation}
plus a potential $-\frac{C_\beta}{x^2}$. Thus, $-\Delta_g$ can be seen as a $\beta$-Grushin operator plus a potential, see also \cite[(1.9)]{boscain_laurent}, where the case $n=1$, $\beta=2$ is considered. 

\subsection*{Spectral properties of $\Delta_g$ in the literature}


The spectral properties of $\Delta_g$, and more generally Laplace-Beltrami operators of singular Riemannian metrics or operators from sub-Riemannian geometry, have been considered in many works in the literature, see for example \cite{cheeger}. In \cite{menikoff1978eigenvalues}, the authors prove Weyl asymptotics of classical pseudo-differential operators on smooth manifolds with a principal symbol vanishing exactly to second order on a smooth symplectic submanifold, which corresponds to the Grushin case. Their asymptotics also include the critical case, where a logarithmic term appears, compare with \eqref{eq:weyl_crit} below. In \cite{boscain}, the Weyl asymptotics for the Grushin cylinder were computed using an explicit computation. We would also like to mention \cite{rivera2022weighted} where weighted Weyl laws are derived, and logarithmic terms appear. For the Grushin case, we also refer to \cite{abatangelo2025solutions, lamberti2021shape}. In \cite{chitour}, the authors study a general class of singular metrics, inspired by the Grushin model.

\medskip 

For the study of the small time asymptotics of sub-Riemannian heat kernels, we refer to \cite{colin2018spectral,colin2021small,colin2022spectral,chang2015heat}. To treat singular boundaries, Melrose developed the b-calculus \cite{melrose1981transformation}, see also \cite{grieser2001basics} for an introduction. Concerning the localisation of eigenfunctions, in \cite{fahs2024boundary}, the authors consider the magnetic Robin Laplacian and show exponential localisation of eigenfunctions near the boundary. 


\medskip

In the following, we give a detailed overview of the results in \cite{yves,larry} as they motivate and provide the context for the main result in the present paper. 

\medskip

For any $\lambda>0$, we denote by $N(\lambda)$ the number of eigenvalues of $\Delta_g$ below $\lambda$. In \cite[Theorem 1]{yves}, the authors determined the leading-order Weyl asymptotics for $N(\lambda)$. As was pointed out after the completion of \cite{yves}, equivalent results can be found much earlier in the literature in \cite{metivier, vulis}, see also \cite{solomesc}. 

\medskip 

For $\beta<2/n$, which is referred to as the subcritical case, the leading-order Weyl asymptotics of $N(\lambda)$ agree with the asymptotics one would expect if $g$ was a  non-singular Riemannian metric on $X$:
\begin{equation}
N(\lambda)=C^W_{n+1}\vol(X,g)\lambda^{\frac{n+1}{2}}+o\left(\lambda^{\frac{n+1}{2}}\right) \quad \text{ as } \lambda\to\infty.
\end{equation}
Here $C^W_{n+1}>0$ is the classical constant in Weyl's law that only depends on the dimension $n+1$. 

\medskip

In the critical case $\beta=2/n$, \cite[Theorem 1]{yves} states that 
\begin{equation}\label{eq:weyl_crit}
N(\lambda)=C_{n}\vol(M,g_1(0))\lambda^{\frac{n+1}{2}}\log(\lambda)+o\left(\lambda^{\frac{n+1}{2}}\log(\lambda)\right) \quad \text{ as } \lambda\to\infty.
\end{equation}
Here, $C_{n}>0$ is an explicit constant only depending on $n$.  In a slightly different, but in a way more general setting, \eqref{eq:weyl_crit} was also shown in \cite[Theorem 5.1]{chitour}.

\medskip

In the supercritical case $\beta>2/n$, using the notation $d:=n(1+\beta/2)$, we have
\begin{equation}
N(\lambda)=C_{n,\beta}\vol(M,g_1(0))\lambda^{\frac{d}{2}}+o\left(\lambda^{\frac{d}{2}}\right) \quad \text{ as } \lambda\to\infty,
\end{equation}
where $C_{n,\beta}>0$ is a constant only depending on $n$ and $\beta$. 

\medskip

Note that both in the critical and supercritical case, the leading-order term in the Weyl asymptotics involves $\vol(M,g_1(0))$. Put differently, only the behaviour of the metric $g$ near the boundary matters for the leading-order behaviour. This suggests an interesting behaviour near the boundary.

\medskip 

Indeed, in \cite[Theorem 2]{yves}, it is shown that in the critical and supercritical case, the average density of eigenfunctions accumulates at the boundary, see also \cite[Theorem 6.1]{chitour}. Denote by $\{\Phi_j\}_{j\in\N}$ an orthonormal basis in $L^2(X,\vol_g)$ of eigenfunctions of $\Delta_g$ with eigenvalues $\{\lambda_j\}_{j\in\N}$. More precisely, \cite[Theorem 2]{yves} states that the average density of eigenfunctions with eigenvalues below $\lambda$
\begin{equation}\label{eq:average_dens_ef}
\frac{1}{N(\lambda)}\sum_{\lambda_j<\lambda}\left|\Phi_j\right|^2\, \mathrm{d}\vol_g
\end{equation}
converges weakly to the uniform distribution on the boundary $\partial X\cong \{0\}\times M$ with respect to $g_1(0)$  as $\lambda\to\infty$. 

\bigskip

It is a natural question to ask if more can be said about the average density of eigenfunctions defined in \eqref{eq:average_dens_ef}. In particular, one might ask at which scale near the boundary this average density typically lives. For the supercritical case $\beta>2/n$, this was answered in \cite[Theorem 1]{larry}, namely the average density of eigenfunctions with eigenvalues less than $\lambda$ is located at a length-scale $\lambda^{-1/2}$ near the boundary $\partial X$. Furthermore, when zooming in at that scale and letting $\lambda\to\infty$, it converges to an explicitly known profile $B$ . More precisely, identifying $X_1\cong [0,1]\times M$, according to \cite[Theorem 1]{larry}, we have for $\beta>2/n$ for any continuous and bounded test function $f:[0,\infty)\times M\to\R$
\begin{align}\label{eq:th_larry}
\begin{split}
&\qquad\lim_{\lambda\to\infty}\frac{1}{N(\lambda)}\sum_{\lambda_j<\lambda}\int_{[0,1]\times M}f\left(\sqrt{\lambda}x,y\right)\left|\Phi_j(x,y)\right|^2\, \,\mathrm{d}\vol_g(x,y)\\
&=\int_{[0,\infty)\times M}f(z,y)B(z,y)\,\mathrm{d}z\frac{\mathrm{d}\vol_{g_1(0)}(y)}{\vol_{g_1(0)}(M)}\,.
\end{split}
\end{align}

\subsection*{Main result}
An open question that remained was at which scale the average density of eigenfunctions accumulates at the boundary of $X$ in the critical case $\beta=2/n$. The answer to this question is given in Theorem \ref{th:main} below. It states that there is no such scale at which the eigenfunctions concentrate near the boundary. The average density of eigenfunctions is distributed over all length scales between $\lambda^{-1/2}$ and $1$: For all $\gamma\in\left[-1 / 2, 0\right]$, the integral of the average density of eigenfunctions with eigenvalues below $\lambda$ integrated over $X_{\lambda^\gamma}$ converges to $2(1/2+\gamma)$ as $\lambda\to\infty$.


\begin{theorem}\label{th:main}
Let $\gamma\in\left[-1 / 2, 0\right]$. Then
\begin{equation}\label{eq:th_main}
\lim_{\lambda\to\infty}\frac{1}{N(\lambda)}\sum_{\lambda_j<\lambda}\int_{X_{\lambda^\gamma}}\left|\Phi_j\right|^2\, \mathrm{d}\vol_g=2\left(\frac{1}{2}+\gamma\right).
\end{equation}
\end{theorem}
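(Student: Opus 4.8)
\emph{Approach.} Set $P_\lambda:=\mathbf{1}_{[0,\lambda)}(\Delta_g)$, so that the numerator in \eqref{eq:th_main} equals $\Tr\big(\mathbf{1}_{X_{\lambda^\gamma}}P_\lambda\mathbf{1}_{X_{\lambda^\gamma}}\big)$ and the denominator equals $N(\lambda)=\Tr P_\lambda$. The plan is to extract from each of these the logarithmically divergent contribution coming from the singular collar, to show the two contributions agree up to the factor $2(\tfrac12+\gamma)$, and to bound everything else by $o\big(\lambda^{(n+1)/2}\log\lambda\big)$, which is negligible by \eqref{eq:weyl_crit}.

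\emph{Reduction to a one-dimensional model.} Fix a small $\epsilon>0$. On $X\setminus X_\epsilon$ the metric $g$ is non-singular, so by the local Weyl law $\Tr\big(\mathbf{1}_{X\setminus X_\epsilon}P_\lambda\big)=O_\epsilon\big(\lambda^{(n+1)/2}\big)=o\big(\lambda^{(n+1)/2}\log\lambda\big)$; since moreover $\sum_{\lambda_j<\lambda}\int_{X\setminus X_\epsilon}\abs{\Phi_j}^2\dd\vol_g\le\Tr\big(\mathbf{1}_{X\setminus X_\epsilon}P_\lambda\big)$, only the collar $X_\epsilon$ matters for both the numerator and $N(\lambda)$. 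On $X_\epsilon$ I use Dirichlet--Neumann bracketing at $\{x=\epsilon\}$ and the quasi-isometry $g\sim\tilde g$ with constant $1+o_\epsilon(1)$ to replace $\Delta_g$ by $\Delta_{\tilde g}$ on $[0,\epsilon]\times M$; the resulting distortion of $\lambda$ by $1+o_\epsilon(1)$ is harmless because the final quantity is a ratio and $\epsilon\to0$ is taken last (using $N_{\tilde g}(c\lambda)/N_{\tilde g}(\lambda)\to c^{(n+1)/2}\to1$ as $c\to1$ from \eqref{eq:weyl_crit}). Expanding in an orthonormal eigenbasis $\{\psi_k\}$ of $\Delta_{g_1(0)}$ with eigenvalues $\{\mu_k\}$ and using \eqref{eq:LB_near_boundary_in_coord} (after the conjugation implicit there, which in the critical case $\beta n/2=1$ turns $\dd\vol_{\tilde g}$ into $\dd x\,\dd\vol_{g_1(0)}$), $\Delta_{\tilde g}$ on the collar becomes the orthogonal direct sum over $k$ of
\[
L_{\mu}:=-\partial_x^2+\frac{C_\beta}{x^2}+\mu\,x^\beta\qquad\text{on }(0,\epsilon),
\]
with the chosen condition at $x=\epsilon$ and no condition at $x=0$, since $C_\beta=\tfrac{\beta n}{4}\big(\tfrac{\beta n}{4}+1\big)=\tfrac34$ is exactly the limit-point threshold. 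Under this decomposition the cutoff $\mathbf{1}_{X_{\lambda^\gamma}}$ becomes $\mathbf{1}_{[0,a_\lambda]}$ in every fibre, with $a_\lambda:=\min(\lambda^\gamma,\epsilon)$, so that, up to the errors above,
\[
\sum_{\lambda_j<\lambda}\int_{X_{\lambda^\gamma}}\abs{\Phi_j}^2\dd\vol_g=\sum_{k}\Tr\Big(\mathbf{1}_{[0,a_\lambda]}\,\mathbf{1}_{[0,\lambda)}(L_{\mu_k})\,\mathbf{1}_{[0,a_\lambda]}\Big)+o\big(\lambda^{(n+1)/2}\log\lambda\big),
\]
and the same formula with $a_\lambda$ replaced by $\epsilon$ represents $N(\lambda)$.

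\emph{Fibrewise Weyl law and summation.} For $a\in(0,\epsilon]$, bracketing in $x$ yields the one-dimensional semiclassical estimate
\[
\Tr\Big(\mathbf{1}_{[0,a]}\,\mathbf{1}_{[0,\lambda)}(L_{\mu})\,\mathbf{1}_{[0,a]}\Big)=\frac1\pi\int_0^a\sqrt{\Big(\lambda-\frac{C_\beta}{x^2}-\mu x^\beta\Big)_+}\,\dd x+R(\lambda,\mu,a),
\]
with $R=O(1)$ per fibre and uniform enough in $\mu$ that $\sum_k R(\lambda,\mu_k,\cdot)=o\big(\lambda^{(n+1)/2}\log\lambda\big)$ (only $\mu_k\lesssim\lambda^{1+1/n}$ give a nonzero main term, i.e.\ about $\lambda^{(n+1)/2}$ fibres). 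Summing the main term over $k$ via Weyl's law $\#\{k:\mu_k<t\}\sim C^W_n\vol(M,g_1(0))\,t^{n/2}$ and a Karamata argument gives $\sum_k F(\mu_k)\sim C^W_n\vol(M,g_1(0))\,\tfrac n2\int_0^\infty F(t)\,t^{n/2-1}\dd t$ for the relevant decreasing $F$; computing the $t$-integral with $\beta=2/n$ by the substitution $t=\Lambda(x)\,x^{-2/n}s$, where $\Lambda(x):=\lambda-C_\beta x^{-2}$ (which brings out the Euler Beta factor $B(\tfrac n2,\tfrac32)$), we obtain
\[
\sum_{\lambda_j<\lambda}\int_{X_{\lambda^\gamma}}\abs{\Phi_j}^2\dd\vol_g\ \sim\ \kappa\int_{\sqrt{C_\beta/\lambda}}^{a_\lambda}\Lambda(x)^{\frac{n+1}{2}}\,\frac{\dd x}{x},\qquad\kappa:=\frac{C^W_n\vol(M,g_1(0))\,n}{2\pi}\,B\!\left(\tfrac n2,\tfrac32\right),
\]
and the same with $a_\lambda$ replaced by $\epsilon$ for $N(\lambda)$.

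\emph{Conclusion and main obstacle.} Writing $\Lambda(x)^{(n+1)/2}=\lambda^{(n+1)/2}\big(1-C_\beta/(\lambda x^2)\big)^{(n+1)/2}$ and splitting the $x$-integral at $x=\lambda^{-1/2+\delta}$, the part with $x\ge\lambda^{-1/2+\delta}$ contributes $\lambda^{(n+1)/2}\big(1+o(1)\big)\big(\log a_\lambda+(\tfrac12-\delta)\log\lambda\big)$ while the part with $x\le\lambda^{-1/2+\delta}$ is $O\big(\delta\,\lambda^{(n+1)/2}\log\lambda\big)$; since $\log a_\lambda=\gamma\log\lambda+O(1)$, letting $\lambda\to\infty$ and then $\delta\to0$ gives $\int_{\sqrt{C_\beta/\lambda}}^{a_\lambda}\Lambda(x)^{(n+1)/2}x^{-1}\dd x=(\tfrac12+\gamma+o(1))\lambda^{(n+1)/2}\log\lambda$, and the analogue with $\epsilon$ gives $(\tfrac12+o(1))\lambda^{(n+1)/2}\log\lambda$. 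Hence the numerator is $(\tfrac12+\gamma+o(1))\kappa\lambda^{(n+1)/2}\log\lambda$ and $N(\lambda)=(\tfrac12+o(1))\kappa\lambda^{(n+1)/2}\log\lambda$, so the ratio tends to $(\tfrac12+\gamma)/\tfrac12=2(\tfrac12+\gamma)$; finally $\epsilon\to0$ removes the quasi-isometry distortion, proving \eqref{eq:th_main}. The main obstacle is the fibrewise Weyl law of the third step: one needs the remainder $R(\lambda,\mu,a)$ controlled uniformly in $\mu$ --- especially near the singular endpoint $x=0$ (where $C_\beta=3/4$ sits exactly at the limit-point threshold) and near the turning points --- so that, summed over the $\sim\lambda^{(n+1)/2}$ relevant fibres, the total error stays $o\big(\lambda^{(n+1)/2}\log\lambda\big)$.
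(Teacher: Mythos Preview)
Your final computation --- summing the one-dimensional semiclassical phase-space integrals over the fibres and extracting the logarithm via $\int x^{-1}\,\dd x$ over $[\lambda^{-1/2},\lambda^\gamma]$ --- is correct and is the same mechanism that drives the paper's result. The problem is that the reduction steps you use to get there do not control the quantity you want. Dirichlet--Neumann bracketing and quasi-isometry compare \emph{eigenvalue counting functions} (equivalently, global traces like $\Tr P_\lambda$ or $\Tr(\Delta_g-\lambda)_-$), not \emph{localized} traces. From a form inequality $(1-c)\Delta_{\tilde g}\le\Delta_g\le(1+c)\Delta_{\tilde g}$ you get $N_g(\lambda)\approx N_{\tilde g}(\lambda)$ via min--max, but the eigenfunctions of $\Delta_g$ and $\Delta_{\tilde g}$ are unrelated, and there is no operator inequality between $\mathbf 1_{[0,\lambda)}(\Delta_g)$ and $\mathbf 1_{[0,\lambda)}(\Delta_{\tilde g})$; hence replacing $\Tr\big(\mathbf 1_{X_{\lambda^\gamma}}P_\lambda\big)$ by its $\tilde g$-analogue is unjustified. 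The same defect recurs at the fibrewise step: bracketing $L_\mu$ at $x=a$ bounds $N(L_\mu,\lambda)$ from both sides, but says nothing about $\Tr\big(\mathbf 1_{[0,a]}\mathbf 1_{[0,\lambda)}(L_\mu)\big)$, which measures the mass on $[0,a]$ of the eigenfunctions of the \emph{uncut} operator. So ``bracketing in $x$ yields the one-dimensional semiclassical estimate'' is precisely the gap; you flag it as the main obstacle, but the earlier quasi-isometry step already has the same flaw.

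The paper's device is designed exactly to bypass this. For $\delta\ne 0$ one writes
\[
\Tr\big(\mathbf 1_{X_{\lambda^\gamma}}P_\lambda\big)=\frac{1}{\delta\lambda}\Big(\Tr\big(P_\lambda(\Delta_g+\delta\lambda\,\mathbf 1_{X_{\lambda^\gamma}}-\lambda)\big)-\Tr(\Delta_g-\lambda)_-\Big),
\]
and then the variational principle gives $\Tr\big(P_\lambda(\Delta_g+\delta\lambda\,\mathbf 1_{X_{\lambda^\gamma}}-\lambda)\big)\ge\Tr(\Delta_g+\delta\lambda\,\mathbf 1_{X_{\lambda^\gamma}}-\lambda)_-$, sandwiching the localized trace (take $\delta>0$ for a lower bound, $\delta<0$ for an upper bound) between differences of \emph{global} negative traces. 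These \emph{are} stable under quasi-isometry and bracketing, so the reduction to the one-dimensional model goes through; one then computes the asymptotics of $\left|\Tr_{[0,\lambda^\gamma]}(P_{j^{2/n}}-\lambda)_-\right|-\left|\Tr_{[0,\lambda^\gamma]}(P_{j^{2/n}}-(1-\delta)\lambda)_-\right|$ and lets $\delta\to 0^\pm$ at the very end. In effect, the cutoff $\mathbf 1_{X_{\lambda^\gamma}}$ is absorbed into the operator, so that every comparison is a genuine eigenvalue/trace comparison. Your argument can be repaired by inserting this perturbation trick; without it, both reduction steps are unjustified.
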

Theorem \ref{th:main} states that for any $\gamma\in\left[-1 / 2, 0\right]$, the proportion of eigenfunctions with eigenvalue less than $\lambda$ located in the neighbourhood of the boundary of $X$ of size $\lambda^\gamma$ is approximately $2\left(1/2+\gamma\right)$ for $\lambda$ large. 

\medskip 

In particular, for $\gamma =-1 / 2$, this proportion is equal to zero. This is in sharp contrast to the corresponding result in the supercritical case \cite[Theorem 1]{larry}, where ``most of the eigenfunctions live on a scale $\lambda^{-1/2}$ near the boundary''.

\medskip

More precisely, due to the linear behaviour in $\gamma\in\left[-1 / 2, 0\right]$ of the right-hand side in \eqref{eq:th_main}, we can also say that the eigenfunctions are localised at scale $\lambda^\gamma$, uniformly distributed in $\gamma\in\left[-1 / 2, 0\right]$ for $\lambda\to\infty$.

\medskip

In order to facilitate the comparison with \cite[Theorem 1]{larry}, see also \eqref{eq:th_larry} above, we present another version of Theorem \ref{th:main} involving a test function $f$, where we also take the distribution along the boundary variable into account:
\begin{theorem}\label{th:test_fct}
Let $f:(-\infty,0]\times M\to\R$ be continuous and bounded. Then
\begin{align}\label{eq:th_test_fct}
\begin{split}
&\qquad\lim_{\lambda\to\infty}\frac{1}{N(\lambda)}\sum_{\lambda_j<\lambda}\int_{[0,1]\times M}f\left(\frac{\log(x)}{\log(\lambda)},y\right)\left|\Phi_j(x,y)\right|^2\, \,\mathrm{d}\vol_g(x,y)\\
&=2\int_{[-1/2,0]\times M}f(\tilde{\gamma},y)\,\mathrm{d}\tilde{\gamma}\frac{\mathrm{d}\vol_{g_1(0)}(y)}{\vol_{g_1(0)}(M)}\,.
\end{split}
\end{align}
\end{theorem}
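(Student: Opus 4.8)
The plan is to deduce Theorem \ref{th:test_fct} from Theorem \ref{th:main} together with the boundary-equidistribution statement \cite[Theorem 2]{yves}. The key observation is that the change of variables $\gamma = \log(x)/\log(\lambda)$ translates the family of ``integrate over $X_{\lambda^\gamma}$'' functionals in Theorem \ref{th:main} into a family of sharp cutoffs in the variable $\tilde\gamma$, and that Theorem \ref{th:main} is exactly the statement that the pushforward, under $(x,y)\mapsto \log(x)/\log(\lambda)$, of the average density of eigenfunctions converges weakly to $2\cdot\mathbf{1}_{[-1/2,0]}(\tilde\gamma)\,\mathrm{d}\tilde\gamma$ on the $\tilde\gamma$-line. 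Indeed, writing $\mu_\lambda$ for the probability measure $\frac{1}{N(\lambda)}\sum_{\lambda_j<\lambda}|\Phi_j|^2\,\mathrm{d}\vol_g$ on $[0,1]\times M$ and $\nu_\lambda$ for its pushforward to $(-\infty,0]$ under $(x,y)\mapsto \log(x)/\log(\lambda)$, the set $X_{\lambda^\gamma}$ corresponds under this map to $\{\tilde\gamma\le\gamma\}$ up to the contribution of $X_1\setminus X_{\lambda^\gamma}$ which lies in $\tilde\gamma\in(\gamma,0]$; hence $\nu_\lambda((-\infty,\gamma]) = \mu_\lambda(X_{\lambda^\gamma}) \to 2(1/2+\gamma)$ for every $\gamma\in[-1/2,0]$, and trivially $\nu_\lambda((-\infty,\gamma])\to 0$ for $\gamma<-1/2$ and $\to 1$ for $\gamma\ge 0$ (the latter since $\mu_\lambda$ is a probability measure supported in $x\le 1$). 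Convergence of distribution functions at every point of $[-1/2,0]$, which is the support of the limit, together with the correct total mass, gives weak convergence $\nu_\lambda \rightharpoonup 2\,\mathbf{1}_{[-1/2,0]}\,\mathrm{d}\tilde\gamma$; in particular, for any continuous bounded $\varphi:(-\infty,0]\to\R$,
\begin{equation}\label{eq:plan_1d}
\frac{1}{N(\lambda)}\sum_{\lambda_j<\lambda}\int_{[0,1]\times M}\varphi\!\left(\frac{\log(x)}{\log(\lambda)}\right)|\Phi_j(x,y)|^2\,\mathrm{d}\vol_g = \int\varphi\,\mathrm{d}\nu_\lambda \longrightarrow 2\int_{-1/2}^0\varphi(\tilde\gamma)\,\mathrm{d}\tilde\gamma .
\end{equation}

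To upgrade \eqref{eq:plan_1d} to a test function $f(\tilde\gamma,y)$ depending also on $y\in M$, I would first treat the product case $f(\tilde\gamma,y)=\varphi(\tilde\gamma)\psi(y)$ and then invoke a density/Stone--Weierstrass argument. For the product case, the strategy is to show that the $y$-marginal and the $\tilde\gamma$-marginal asymptotically decouple: the mass of $\mu_\lambda$ concentrates near the boundary (Theorem \ref{th:main} with $\gamma=0$ gives total mass $1$, and for any fixed $\epsilon>0$ the mass outside $X_\epsilon$ tends to $0$), and on $X_\epsilon$ the metric $g$ is, up to a quasi-isometry constant close to $1$, the product-type metric $\tilde g = \mathrm{d}x^2 + x^{-\beta}g_1(0)$ of \eqref{eq:LB_near_boundary_in_coord}; for this model operator the eigenfunctions separate variables, $\Phi(x,y) = u(x)\,e_k(y)$ with $e_k$ eigenfunctions of $\Delta_{g_1(0)}$ on $M$, so that $|\Phi(x,y)|^2\,\mathrm{d}\vol_g$ factorises and the $y$-average is governed by Weyl's law on $(M,g_1(0))$, which is exactly the equidistribution content of \cite[Theorem 2]{yves}. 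Thus the $y$-marginal of $\mu_\lambda$ converges weakly to $\mathrm{d}\vol_{g_1(0)}/\vol_{g_1(0)}(M)$, and combining this with \eqref{eq:plan_1d} via the approximate product structure yields
\begin{equation}
\int_{[0,1]\times M}\varphi\!\left(\tfrac{\log x}{\log\lambda}\right)\psi(y)\,\mathrm{d}\mu_\lambda \longrightarrow 2\int_{-1/2}^0\varphi(\tilde\gamma)\,\mathrm{d}\tilde\gamma\,\frac{\int_M\psi\,\mathrm{d}\vol_{g_1(0)}}{\vol_{g_1(0)}(M)},
\end{equation}
which is \eqref{eq:th_test_fct} for $f=\varphi\otimes\psi$. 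Finite linear combinations of such products are dense in $C^0((-\infty,0]\times M)$ in the topology of uniform convergence on $[-1/2-\delta,0]\times M$ for each $\delta$, and since $\mu_\lambda$ has negligible mass on $\{x<\lambda^{-1/2-\delta}\}$ by Theorem \ref{th:main} (the left endpoint $\gamma=-1/2$ giving mass $0$, hence a fortiori $\gamma=-1/2-\delta$), a standard $\epsilon/3$ argument extends the convergence to all continuous bounded $f$.

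The main obstacle I anticipate is the decoupling step: justifying rigorously that the joint distribution of $(\log x/\log\lambda, y)$ under $\mu_\lambda$ is asymptotically a product. The subtlety is that at length scales $x\sim\lambda^\gamma$ with $\gamma\in(-1/2,0)$ — precisely the scales that carry the mass — one is neither in the semiclassical interior regime nor at the boundary layer $x\sim\lambda^{-1/2}$, so one cannot simply quote a single limiting profile; instead one needs uniform control over this whole range of intermediate scales. The cleanest route is probably to reprove the relevant estimate at the level of the operator: approximate $\Delta_g$ by $\Delta_{\tilde g}$ on $X_{\epsilon_0}$ (legitimate by the quasi-isometry with constant $\to 1$), separate variables in $\Delta_{\tilde g}$, and then for each angular mode use the known one-dimensional Weyl-type asymptotics — as in the proof of \eqref{eq:weyl_crit} in \cite{yves} — now weighted by the cutoff $\varphi(\log x/\log\lambda)$, summing over modes with the help of the Weyl count on $M$. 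This is essentially a weighted version of the computation already carried out in \cite{yves,larry}, and the critical-case logarithm is what makes the $\gamma$-distribution come out uniform on $[-1/2,0]$ rather than concentrated. An alternative, if a direct proof of Theorem \ref{th:test_fct} is given first, is to reverse the logic: derive Theorem \ref{th:main} from Theorem \ref{th:test_fct} by approximating $\mathbf{1}_{X_{\lambda^\gamma}}$ from above and below by continuous functions of $\log x/\log\lambda$, which works because the limit measure $2\,\mathbf{1}_{[-1/2,0]}\,\mathrm{d}\tilde\gamma$ has no atoms.
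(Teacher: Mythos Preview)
Your reduction to the one-dimensional marginal is correct and is exactly what Theorem \ref{th:main} gives: the pushforward $\nu_\lambda$ has distribution function $\mu_\lambda(X_{\lambda^\gamma})$, and convergence of these to $2(\tfrac12+\gamma)$ on $[-\tfrac12,0]$ (with monotonicity handling $\gamma<-\tfrac12$) yields weak convergence to the uniform law. Where your route diverges from the paper's is in the treatment of the $y$-variable. You aim to prove asymptotic \emph{decoupling} of the $(\tilde\gamma,y)$-marginals by passing to the model operator $\Delta_{\tilde g}$, separating variables, and then invoking density of tensor products. The paper instead avoids any decoupling statement: it approximates $f$ in $L^\infty([-1/2,0]\times M)$ by finite sums of indicators of product sets $(-\infty,\gamma]\times\tilde M$ with $\tilde M\subset M$ open with piecewise smooth boundary, and for each such $\tilde M$ simply re-runs the proof of Theorem \ref{th:main} (hence also Proposition \ref{prop:n_tr}) with Dirichlet--Neumann bracketing in the $y$-direction at $\partial\tilde M$. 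This buys a direct argument with no new analytic ingredient beyond what already went into Theorem \ref{th:main}.

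Your approach would ultimately work, but the heuristic you give for decoupling has a soft spot: quasi-isometry of $g$ and $\tilde g$ lets you compare eigenvalue \emph{counts} and \emph{traces} via the variational principle, but it does not by itself transfer eigenfunction \emph{densities} $|\Phi_j|^2$ from $\Delta_g$ to $\Delta_{\tilde g}$, so the separation-of-variables picture for $\Delta_{\tilde g}$ cannot be applied directly to the $\Phi_j$. The fix you yourself propose---redoing the trace asymptotics with a weight $\psi(y)$, i.e.\ inserting a multiplication operator and using the variational/bracketing machinery---is precisely what the paper does, just organised around indicators $1_{\tilde M}$ rather than smooth $\psi$. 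So your route is correct in spirit but collapses back to the paper's once the decoupling step is made rigorous; the paper's formulation is shorter because it sidesteps the decoupling language entirely.
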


\subsection*{Structure of the paper}
In Section \ref{s:prop}, we prove the key ingredient of the proof of Theorem \ref{th:main}, which are some trace asymptotics, detailed in Proposition \ref{prop:n_tr} below. Then, in Section \ref{s:th_proofs}, we prove Theorem \ref{th:main} using Proposition \ref{prop:n_tr}. We also explain the proof of Theorem \ref{th:test_fct}.

\subsection*{Notation}
For any self-adjoint operator $H$, we denote by $\Tr(H)_-$ the sum of negative eigenvalues of $H$ if this quantity is finite. Otherwise, we set $\Tr(H)_-=-\infty$. In particular, we use the sign convention that we always have $\Tr(H)_-\leqslant 0$.

Furthermore, we denote by $N(H, \lambda)$ the number of eigenvalues of $H$ that are below $\lambda\in\R$ counted with multiplicity.

\subsection*{Acknowledgments}
I would like to thank Yves Colin de Verdière and Emmanuel Trélat for introducing me to the topic of sub-Riemannian Laplacians, for many very helpful discussions, and for remarks on the literature. I would also like to thank Larry Read for helpful discussions and remarks. I would like to thank Phan Thành Nam and Laure Saint-Raymond for their support and for making possible several visits to Institut des Hautes Etudes Scientifiques, where I started working on this topic. I acknowledge the support from the European
Research Council via the ERC CoG RAMBAS (Project-Nr. 10104424).

\section{Trace asymptotics}\label{s:prop}
In this section, we prove the key ingredient of the proof of Theorem \ref{th:main}, which is the following:
\begin{proposition}\label{prop:n_tr}
Let $\gamma \in\left[-1 / 2, 0\right]$. Then, uniformly in $\delta \in[-1 / 2,1 / 2]$,
\begin{align}\label{eq:goal}
\begin{split}
&\qquad \frac{1}{\lambda N\left(\lambda\right)}\left(\left|\Tr\left(\Delta_g- \lambda\right)_-\right|-\left|\Tr\left(\Delta_g+\delta \lambda 1_{X_{\lambda^{\gamma}}}- \lambda\right)_-\right|\right)\\
&=\frac{2}{n+3}\left(1-(1-\delta)^{\frac{n+3}{2}}\right) 2(1 / 2+\gamma)+o(1) \quad \text{ as } \lambda \rightarrow \infty.
\end{split}
\end{align}
Here $1_{X_{\lambda^{\gamma}}}$ denotes the multiplication operator with the indicator function of $X_{\lambda^{\gamma}}$.
\end{proposition}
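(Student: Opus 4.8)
The plan is to localise the trace difference to a collar of the boundary and reduce it to an explicit semiclassical integral, in which the critical exponent $\beta n = 2$ is responsible for a logarithm produced by the range of scales $\lambda^{-1/2} \lesssim x \lesssim \lambda^{\gamma}$. First I would use that $g$ is quasi-isometric to $\tilde g$ on $X_{\epsilon_0}$ with a constant tending to $1$ as $\epsilon_0 \to 0$, and that the operator $1_{X_{\lambda^{\gamma}}}$ is unchanged under this comparison, to replace $\Delta_g$ by $\Delta_{\tilde g}$ on $X_{\epsilon_0}$: the contributions of the smooth region $X \setminus X_{\epsilon_0}$, split off by Dirichlet--Neumann bracketing along $\{x = \epsilon_0\}$, are $O(\lambda^{(n+3)/2})$ by the standard Weyl law --- with or without the bounded perturbation $\delta \lambda 1_{X_{\lambda^{\gamma}}}$ --- hence $o(\lambda N(\lambda))$ because $\lambda N(\lambda) \asymp \lambda^{(n+3)/2} \log \lambda$ by \eqref{eq:weyl_crit}, so one sends $\lambda \to \infty$ first and $\epsilon_0 \to 0$ afterwards. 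By \eqref{eq:LB_near_boundary_in_coord}, conjugating by $f \mapsto x^{-\beta n/4} f$ and expanding in an orthonormal eigenbasis $\{\psi_k\}$ of $\Delta_{g_1(0)}$ (eigenvalues $\mu_k$, with $\#\{\mu_k \le \mu\} \sim (2\pi)^{-n} \omega_n \vol(M, g_1(0)) \mu^{n/2}$) diagonalises $\Delta_{\tilde g}$ and the $x$-dependent perturbation simultaneously, and reduces the trace difference to $\sum_k \bigl( \abs{\Tr(L_k - \lambda)_-} - \abs{\Tr(L_k + \delta \lambda 1_{(0,\lambda^{\gamma})} - \lambda)_-} \bigr)$, where $L_k = -\partial_x^2 + C_\beta x^{-2} + \mu_k x^{\beta}$ acts on $L^2((0,\epsilon_0), \mathrm{d}x)$ and is limit-point at $0$ (here $C_\beta = 3/4$ and the Bessel index is $1$, since $\beta n = 2$).

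Next I would insert a Dirichlet condition at $x = \lambda^{\gamma}$ in each $L_k$. On $(\lambda^{\gamma}, \epsilon_0)$ the perturbation vanishes, so those pieces cancel in the difference; on $(0,\lambda^{\gamma})$ the perturbation is the constant $\delta\lambda$, so the $k$-th summand becomes $\abs{\Tr(L_k^{(0,\lambda^{\gamma})} - \lambda)_-} - \abs{\Tr(L_k^{(0,\lambda^{\gamma})} - (1-\delta)\lambda)_-} = \int_{(1-\delta)\lambda}^{\lambda} N(L_k^{(0,\lambda^{\gamma})}, E)\, \mathrm{d}E$. The bracketing costs at most $O(\lambda)$ per mode, and only the $O(\lambda^{(n+1)/2})$ modes with $\mu_k \lesssim \lambda^{1 + 1/n}$ contribute (for larger $\mu_k$ one has $L_k \ge c\, \mu_k^{n/(n+1)} > \tfrac32 \lambda$ and both traces vanish), so the total bracketing cost is $O(\lambda^{(n+3)/2}) = o(\lambda N(\lambda))$. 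Into the remaining integral I would feed the one-dimensional Weyl count $N(L_k^{(0,\lambda^{\gamma})}, E) = \tfrac1\pi \int_0^{\lambda^{\gamma}} (E - C_\beta x^{-2} - \mu_k x^{\beta})_+^{1/2}\, \mathrm{d}x + O(1)$, uniformly in $E \in [(1-\delta)\lambda, \lambda]$ and in $k$: this follows from one-dimensional oscillation theory with turning-point and endpoint corrections, the mode $\mu_k = 0$ being exactly solvable via the zeros of $J_1$ and a general mode reducing to the fixed operator $-\partial_y^2 + C_\beta y^{-2} + y^{\beta}$ by the scaling $x = \mu_k^{-n/(2(n+1))} y$, $L_k = \mu_k^{n/(n+1)}(-\partial_y^2 + C_\beta y^{-2} + y^{\beta})$. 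Integrating over $E$, the $k$-th summand equals $\tfrac{2}{3\pi} \int_0^{\lambda^{\gamma}} \bigl[ (\lambda - C_\beta x^{-2} - \mu_k x^{\beta})_+^{3/2} - ((1-\delta)\lambda - C_\beta x^{-2} - \mu_k x^{\beta})_+^{3/2} \bigr]\, \mathrm{d}x$ up to an error that is again $o(\lambda N(\lambda))$ after summation.

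I would then replace $\sum_k$ by $(2\pi)^{-n} \omega_n \vol(M, g_1(0)) \tfrac n2 \int_0^\infty (\cdot)\, \mu^{n/2 - 1}\, \mathrm{d}\mu$ (the Weyl remainder on $M$ contributing a lower-order term) and carry out the elementary integrals. The $\mu$-integral $\int_0^\infty (A - \mu x^{\beta})_+^{3/2} \mu^{n/2 - 1}\, \mathrm{d}\mu = x^{-\beta n/2} A^{(n+3)/2} \mathrm{B}(\tfrac n2, \tfrac52) = x^{-1} A^{(n+3)/2} \mathrm{B}(\tfrac n2, \tfrac52)$ (using $\beta n = 2$) turns the difference into a constant times $\int_0^{\lambda^{\gamma}} \bigl[ (\lambda - C_\beta x^{-2})_+^{(n+3)/2} - ((1-\delta)\lambda - C_\beta x^{-2})_+^{(n+3)/2} \bigr] x^{-1}\, \mathrm{d}x$, and the substitution $u = C_\beta/(\lambda x^2)$ converts this into $\lambda^{(n+3)/2} \int_{C_\beta \lambda^{-1-2\gamma}}^{\infty} \bigl[ (1-u)_+^{(n+3)/2} - (1-\delta-u)_+^{(n+3)/2} \bigr]\, \tfrac{\mathrm{d}u}{2u}$. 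Since the bracket is bounded, compactly supported, and tends to $1 - (1-\delta)^{(n+3)/2}$ as $u \downarrow 0$, this equals $\lambda^{(n+3)/2} \bigl( 1 - (1-\delta)^{(n+3)/2} \bigr)(\tfrac12 + \gamma) \log \lambda + O(\lambda^{(n+3)/2})$, uniformly in $\delta \in [-1/2, 1/2]$; in particular the logarithm disappears exactly at $\gamma = -1/2$. The same computation without the perturbation (its $\mu$-integral being $x^{-1} A^{(n+1)/2} \mathrm{B}(\tfrac n2, \tfrac32)$) reproduces the constant $C_n$ of \eqref{eq:weyl_crit}, so after dividing by $\lambda N(\lambda)$ all the Gamma-function constants collapse --- via $\tfrac43\, \mathrm{B}(\tfrac n2, \tfrac52)/\mathrm{B}(\tfrac n2, \tfrac32) = \tfrac43 \cdot \tfrac{3}{n+3} = \tfrac{4}{n+3}$ --- to exactly $\tfrac{2}{n+3}\bigl( 1 - (1-\delta)^{(n+3)/2} \bigr) \cdot 2(\tfrac12 + \gamma)$, with uniformity in $\delta$ inherited from the bounded, smooth dependence on $\delta$ throughout.

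The main obstacle I anticipate is the second step: producing a one-dimensional semiclassical eigenvalue count for the inverse-square-singular operator $L_k$ on the $\lambda$-dependent interval $(0,\lambda^{\gamma})$ that is uniform simultaneously in the energy window $[(1-\delta)\lambda, \lambda]$ and in the mode parameter $\mu_k$ over its whole relevant range $[O(1), O(\lambda^{1+1/n})]$, with an error surviving summation against $\asymp \lambda^{(n+1)/2}$ modes without swamping the logarithm. Because $\beta = 2/n$ is precisely the exponent at which the logarithm first appears, the error must be beaten with a genuine --- not merely polynomial --- margin, and I expect the scaling $L_k = \mu_k^{n/(n+1)}(-\partial_y^2 + C_\beta y^{-2} + y^{\beta})$, together with the exact Bessel description of the zero mode, to be what makes this uniform control possible.
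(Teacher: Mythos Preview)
Your proposal is correct and follows the same overall strategy as the paper: reduce to the family of one-dimensional operators $L_{k}=-\partial_x^2+\tfrac{3}{4}x^{-2}+\mu_k x^{\beta}$ via the product structure near $\partial X$, localise by Dirichlet--Neumann bracketing at $x=\lambda^{\gamma}$ (so that on $(0,\lambda^{\gamma})$ the perturbation becomes the constant $\delta\lambda$ and on $(\lambda^{\gamma},\epsilon_0)$ it cancels), and exploit the scaling $L_{\mu}\cong \mu^{n/(n+1)}L_{1}$ to obtain uniform control over the modes.

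The organisation of the final computation is different, and it is worth noting what each version buys. The paper scales every mode to the fixed operator $P_1$ and then feeds in the ready-made asymptotics $N_{[0,\omega^{1/\beta}]}(P_1,\omega)\sim A\,\omega^{1/2+1/\beta}$ and $\bigl|\Tr_{[0,\omega^{1/\beta}]}(P_1-\omega)_-\bigr|\sim \tfrac{2}{n+3}A\,\omega^{3/2+1/\beta}$ from the companion reference; the logarithm then comes out of the discrete harmonic sum $\sum_j j^{-1}$ over mode indices $j\in[\lambda^{n/2},E\lambda^{(n+1)/2}]$. You instead keep the Weyl integral $\tfrac{1}{\pi}\int(\,\cdot\,)_+^{1/2}\,\mathrm{d}x$ for each mode, pass from $\sum_k$ to a $\mu$-integral via the boundary Weyl law, and obtain the logarithm from $\int x^{-1}\,\mathrm{d}x$ after the $\mu$-integration produces the critical factor $x^{-\beta n/2}=x^{-1}$; your Beta-function identity $\tfrac{4}{3}\,\mathrm{B}(\tfrac{n}{2},\tfrac{5}{2})/\mathrm{B}(\tfrac{n}{2},\tfrac{3}{2})=\tfrac{4}{n+3}$ plays the role of the paper's ratio $\tfrac{2}{n+3}A\big/\tfrac{1}{2}A$. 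These are the same phase-space volume computed in two orders, so the constants necessarily agree. Your route is more self-contained and makes the mechanism $\beta n=2\Rightarrow x^{-1}\Rightarrow\log$ visible at the level of an explicit integral; the paper's route is shorter because the hard uniform one-dimensional estimate --- precisely the obstacle you flag at the end --- is quoted rather than reproved, and the scaling to $P_1$ is exactly the device that delivers that uniformity.
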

\begin{proof}
As we have seen in \cite{yves, drarbeit}, the problem can be reduced to a simpler one involving the one-dimensional operator
\begin{equation}
P_{\mu}:=-\partial_{x}^{2}+\frac{3}{4x^{2}}+\mu x^{\beta} \quad \text { on }\left[0, \varepsilon_{0}\right] \text {, }
\end{equation}
where 
$\varepsilon_{0}>0$ is small, but fixed. Compare also with \eqref{eq:LB_near_boundary_in_coord}.
We put Dirichlet boundary conditions at $x=0$ and Dirichlet or Neumann boundary conditions at $x=\varepsilon_{0}$. As in \cite[Section 3.13]{drarbeit}, the leading order asymptotics of 
\begin{equation}
\left|\Tr\left(\Delta_g- \lambda\right)_-\right|-\left|\Tr\left(\Delta_g+\delta \lambda 1_{X_{\lambda^{\gamma}}}- \lambda\right)_-\right|\,,
\end{equation}
up to an explicit constant, is given by the leading order asymptotics of 
\begin{equation}\label{eq:davon_asy_ausr}
\sum_{j=\lambda^{n / 2}}^{E \lambda^{(n+1)/2}}\left(
\left|\Tr_{\left[0, \varepsilon_{0}\right]}^{D, D / N}\left(P_{j^{2/n}}- \lambda\right)_{-}\right|-\left|\Tr_{\left[0, \varepsilon_{0}\right]}^{D, D/N}\left(P_{j^{2/n}}+\delta \lambda 1_{\left[0, \lambda^{\gamma}\right]}- \lambda\right)_-\right|
\right)
\end{equation}
for $E>0$ fixed, but chosen arbitrarily small. Later, after letting $\lambda \rightarrow \infty$, we will let $E \rightarrow 0$. Here the superscript $D, D / N$ denotes the Dirichlet boundary conditions at $x=0$ and the Dirichlet or Neumann boundary conditions at $x=\varepsilon_{0}$. 

\medskip

Similarly, the leading order asymptotics of $N\left(\lambda\right)$ is given, up to the same explicit constant, by
\begin{equation}
\sum_{j=\lambda^{n / 2}}^{E \lambda^{(n+1)/2}} N\left(P_{j^{2/n}}, \lambda\right).
\end{equation}
By \cite[Section 3.13]{drarbeit}, we have
\begin{align}\label{eq:asympt_n}
\begin{split}
\lim_{E\to 0}\lim_{\lambda\to\infty}\left(\lambda^{\frac{n+1}{2}} \log (\lambda)\right)^{-1}\sum_{j=\lambda^{n / 2}}^{E \lambda^{(n+1)/2}}
N\left(P_{j^{2/n}}, \lambda\right)= \frac{1}{2}A
,
\end{split}
\end{align}
where $A>0$ was defined in \cite[(3.32)]{drarbeit} by
\begin{equation}\label{eq:A_def}
A:=\frac{1}{\pi}\int_0^1\sqrt{1-z^\beta}\, dz.
\end{equation}

\bigskip

Let us now turn to the computation of the leading-order asymptotics of \eqref{eq:davon_asy_ausr}. First note that by Dirichlet-Neumann bracketing, we have for any $\delta \in[-1 / 2,1 / 2]$ (and in particular also for $\delta=0$)
\begin{align}\label{eq:dsplitgamma}
\begin{split}
\left|\Tr_{\left[0, \varepsilon_{0}\right]}^{D, D / N}\left(P_{j^{2/n}}+\delta \lambda 1_{\left[0, \lambda^{\gamma}\right]}- \lambda\right)_-\right|&
\geqslant \left|\Tr_{\left[0, \lambda^{\gamma}\right]}^{D, D}\left(P_{j^{2/n}}- (1-\delta)\lambda\right)_-\right|+\left|\Tr_{\left[\lambda^{\gamma}, \varepsilon_{0}\right]}^{D, D / N}\left(P_{j^{2/n}}- \lambda\right)_-\right|
\end{split}
\end{align}
and
\begin{align}\label{eq:nsplitgamma}
\begin{split}
\left|\Tr_{\left[0, \varepsilon_{0}\right]}^{D, D / N}\left(P_{j^{2/n}}+\delta \lambda 1_{\left[0, \lambda^{\gamma}\right]}- \lambda\right)_-\right|&
\leqslant \left|\Tr_{\left[0, \lambda^{\gamma}\right]}^{D, N}\left(P_{j^{2/n}}- (1-\delta)\lambda\right)_-\right|+\left|\Tr_{\left[\lambda^{\gamma}, \varepsilon_{0}\right]}^{N, D / N}\left(P_{j^{2/n}}- \lambda\right)_-\right|
\end{split}
\end{align}
As in \cite[Section 3.13]{drarbeit}, in order to compute the leading order asymptotics of the terms on the right-hand side of \eqref{eq:dsplitgamma} and \eqref{eq:nsplitgamma},
one can decompose the interval $\left[0, \lambda^{\gamma}\right]$ or $\left[\lambda^{\gamma}, \varepsilon_{0}\right]$ into smaller subintervals using Dirichlet-Neumann bracketing. In particular, the Dirichlet or Neumann boundary conditions at $x=\lambda^{\gamma}$ do not make a difference for the leading-order asymptotics. 

\bigskip

Using \eqref{eq:dsplitgamma} and \eqref{eq:nsplitgamma} for $\delta$ and $\delta=0$, in order to compute the leading order asymptotics of \eqref{eq:davon_asy_ausr}, it suffices to compute the leading-order asymptotics of 
\begin{equation}\label{eq:asy_einf}
\sum_{j=\lambda^{n / 2}}^{E \lambda^{(n+1)/2}}\left(
\left|\Tr_{\left[0, \lambda^{\gamma}\right]}^{D, D/N}\left(P_{j^{2/n}}- \lambda\right)_-\right|
- \left|\Tr_{\left[0, \lambda^{\gamma}\right]}^{D, D/N}\left(P_{j^{2/n}}- (1-\delta)\lambda\right)_-\right|
\right).
\end{equation}

\bigskip

 Recall from \cite[Proposition 5]{yves} that for any $\mu, a>0$, the operator $P_{\mu}$ on $[0, a]$ is unitarily equivalent to
\begin{equation}
\mu^{2/(2+\beta)} P_{1} \quad \text { on }\left[0, \mu^{1/(2+\beta)} a\right] \text {. }
\end{equation}
Using $\beta=2/n$ and taking $\mu=j^{2/n}$, we obtain
\begin{equation}\label{eq:nach_unit}
\left|\Tr_{\left[0, \lambda^{\gamma}\right]}^{D, D / N}\left(P_{j^{2/n}}-(1-\delta) \lambda\right)_-\right|
=j^{\frac{2}{n+1}}\left|\Tr_{\left[0, j^{1 /(n+1)} \lambda^{\gamma}\right]}^{D, D / N}\left(P_{1}-(1-\delta) \lambda{j}^{-\frac{2}{n+1}}\right)_-\right| .
\end{equation}

Note that if $j \geqslant 2^{1 / \beta} \lambda^{1 / \beta-\gamma}$, then for all $\delta \in[-1 / 2,1 / 2]$,
\begin{equation}\label{eq:j_gamma_ineq}
\left(j^{\frac{1}{n+1}} \lambda^{\gamma}\right)^{\beta} \geqslant(1-\delta) \lambda j^{-\frac{2}{n+1}}.
\end{equation}
We use the notation
\begin{equation}\label{eq:omega_delta_def}
\omega_{\delta}:=(1-\delta) \lambda j^{-\frac{2}{n+1}}.
\end{equation}
For all $j \geqslant 2^{1 / \beta} \lambda^{1 / \beta-\gamma}$, we know by \eqref{eq:j_gamma_ineq} that $\left[0, j^{1 /(n+1)} \lambda^{\gamma}\right] \supset\left[0, \omega_{\delta}^{1 / \beta}\right]$ and therefore, 
\begin{equation}\label{eq:d_omega_delta_ung}
\left|\Tr_{\left[0, j^{1 /(n+1)} \lambda^{\gamma}\right]}^{D, D / N}\left(P_{1}-\omega_{\delta}\right)_-\right|\geqslant\left|\Tr_{\left[0, \omega_{\delta}^{1 / \beta}\right]}^{D, D }\left(P_{1}-\omega_{\delta}\right)_-\right|.
\end{equation}
Since $P_{1} \geqslant \omega_{\delta}$ on $\left[\omega_{\delta}^{1 / \beta}, \infty\right)$, we also have
\begin{align}\label{eq:n_omega_delta_ung}
\begin{split}
\left|\Tr_{\left[0, j^{1 /(n+1)} \lambda^{\gamma}\right]}^{D, D / N}\left(P_{1}-\omega_{\delta}\right)_-\right|&\leqslant\left|\Tr_{\left[0, \omega_{\delta}^{1 / \beta}\right]}^{D,  N}\left(P_{1}-\omega_{\delta}\right)_-\right|+\left|\Tr_{\left[\omega_{\delta}^{1 / \beta}, j^{1 /(n+1)} \lambda^{\gamma}\right]}^{N, D/N }\left(P_{1}-\omega_{\delta}\right)_-\right|\\
&=\left|\Tr_{\left[0, \omega_{\delta}^{1 / \beta}\right]}^{D,  N}\left(P_{1}-\omega_{\delta}\right)_-\right|.
\end{split}
\end{align}
Thus, in view of \eqref{eq:nach_unit}, we would like to compute the leading-order asymptotics of 
\begin{equation}\label{eq:davon_wollen_wir_asy}
\sum_{j=\lambda^{n / 2}}^{E \lambda^{(n+1)/2}}j^{\frac{2}{n+1}}\left(
\left|\Tr_{\left[0, \omega_{0}^{1 / \beta}\right]}^{D,  D/N}\left(P_{1}-\omega_{0}\right)_-\right|
- \left|\Tr_{\left[0, \omega_{\delta}^{1 / \beta}\right]}^{D,  D/N}\left(P_{1}-\omega_{\delta}\right)_-\right|
\right).
\end{equation}

\bigskip

Uniformly in $\delta \in[-1 / 2,1 / 2]$, recall from \cite[(3.31)]{drarbeit} that  for every $\eta>0$, there exists $E=E(\eta)>0$ small enough such that for every $j \leqslant E \lambda^{\frac{n+1}{2}}$, we have 
\begin{equation}\label{eq:NintP1}
N_{\left[0, \omega_{\delta}^{1 / \beta}\right]}^{D, D / N}\left(P_{1}, \omega_{\delta}\right) \in\left[(1+\eta)^{-1} A \omega_{\delta}^{\frac{1}{2}+\frac{1}{\beta}},(1+\eta) A \omega_{\delta}^{\frac{1}{2}+\frac{1}{\beta}}\right]\, ,
\end{equation}
where $A>0$ was defined in \eqref{eq:A_def}.
Similarly, for the traces, we have
\begin{equation}\label{eq:TrintP1}
\left|\Tr_{\left[0, \omega_{\delta}^{1 / \beta}\right]}^{D,  D/N}\left(P_{1}-\omega_{\delta}\right)_-\right| \in\left[(1+\eta)^{-1} \frac{2}{n+3}
A \omega_{\delta}^{\frac{3}{2}+\frac{1}{\beta}},(1+\eta)  \frac{2}{n+3} A \omega_{\delta}^{\frac{3}{2}+\frac{1}{\beta}}\right].
\end{equation}
Here we used that
\begin{equation}
\frac{2}{3\pi}\int_0^1\left(1-z^\beta\right)^{\frac{3}{2}}\, dz=\frac{2}{n+3} A.
\end{equation}

\bigskip

Note that
\begin{align}\label{eq:omega_comp}
\begin{split}
&\quad \sum_{j=2^{1 / \beta} \lambda^{1 / \beta-\gamma}}^{E\lambda^{(n+1)/2}}j^{\frac{2}{n+1}}\left(\omega_{\delta}^{\frac{n+3}{2}}-\omega_{0}^{\frac{n+3}{2}}\right) 
=  \left(1-(1-\delta)^{\frac{n+3}{2}}\right)\lambda^{\frac{n+3}{2}} \sum_{j=2^{1 / \beta} \lambda^{1 / \beta-\gamma}}^{E \lambda^{{(n+1)/2}} }j^{-1} \\
= & \left(1-(1-\delta)^{\frac{n+3}{2}}\right) \lambda^{\frac{n+3}{2}}\left(\log \left(E \lambda^{\frac{n+1}{2}}\right)-\log \left(2^{-1 / \beta} \lambda^{1 / \beta-\gamma}\right)\right)+o\left(\lambda^{\frac{n+3}{2}} \log (\lambda)\right) \\
= & \left((1-\delta)^{\frac{n+3}{2}}-1\right)\left(\frac{1}{2}+\gamma\right) \lambda^{\frac{n+3}{2}} \log (\lambda)+o\left(\lambda^{\frac{n+3}{2}} \log (\lambda)\right)  
\end{split}
\end{align}
as $\lambda\to\infty$. Here we used that $\beta=2/n$ in the last step. 

\bigskip

Furthermore, the sum of the terms corresponding to $\lambda^{n / 2} \leqslant j \leqslant 2^{1 / \beta} \lambda^{1 / \beta-\gamma}$ is of subleading order. Indeed, for all $j \in \mathbb{N}$ and all $\delta \in[-1 / 2,1 / 2]$,
\begin{equation}\label{eq:trlowerordertermest}
\left|\Tr_{\left[0, \lambda^{\gamma}\right]}^{D, D / N}\left(P_{j^{2/n}}-(1-\delta) \lambda\right)_-\right| \leqslant \left|\Tr_{\left[0, \lambda^{\gamma}\right]}^{D, D / N}\left(-\partial_{x}^{2}-(1-\delta) \lambda\right)_-\right|\leqslant 1+\frac{2}{3\pi} \lambda^{\gamma} \left((1-\delta) \lambda\right)^{\frac{3}{2}.}
\end{equation}
By \eqref{eq:trlowerordertermest}, we obtain for all $\lambda \geqslant 1$
\begin{equation}\label{eq:lower_order}
\sum_{j=\lambda^{n / 2}}^{2^{1 / \beta} \lambda^{1 / \beta-\gamma}} \quad \left|\Tr_{\left[0, \lambda^{\gamma}\right]}^{D, D / N}\left(P_{j^{2/n}}-(1-\delta) \lambda\right)_-\right| \leqslant C \lambda^{\frac{1}{\beta}-\gamma} \lambda^{\gamma+\frac{3}{2}}=C \lambda^{\frac{n+3}{2}}
\end{equation}
where the constant $C>0$ is uniform in $\lambda \geqslant 1$ and $\delta \in[-1 / 2,1 / 2]$. 

\bigskip

Combining \eqref{eq:TrintP1}, \eqref{eq:omega_comp} and \eqref{eq:lower_order}, we obtain
\begin{align}\label{eq:asympt_trace}
\begin{split}
&\lim_{E\to 0}\lim_{\lambda\to\infty}\left(\lambda^{\frac{n+3}{2}} \log (\lambda)\right)^{-1} \sum_{j=\lambda^{n / 2}}^{E \lambda^{(n+1)/2}}j^{\frac{2}{n+1}}
\left(
\left|\Tr_{\left[0, \omega_{0}^{1 / \beta}\right]}^{D,  D/N}\left(P_{1}-\omega_{0}\right)_-\right|
- \left|\Tr_{\left[0, \omega_{\delta}^{1 / \beta}\right]}^{D,  D/N}\left(P_{1}-\omega_{\delta}\right)_-\right|
\right)\\
&= \left((1-\delta)^{\frac{n+3}{2}}-1\right)\left(\frac{1}{2}+\gamma\right)\frac{2}{n+3} A
.
\end{split}
\end{align}
Finally, from \eqref{eq:asympt_n}, \eqref{eq:davon_wollen_wir_asy} and \eqref{eq:asympt_trace}, we obtain \eqref{eq:goal}.
\end{proof}
\section{Proofs of Theorem \ref{th:main} and Theorem \ref{th:test_fct}}\label{s:th_proofs}
In this section, we prove Theorem \ref{th:main} using Proposition \ref{prop:n_tr}. Then we explain how to prove Theorem \ref{th:test_fct} using Theorem \ref{th:main}. 
\begin{proof}[Proof of Theorem \ref{th:main}]
We define the projection operator
\begin{equation}
\Gamma_\lambda:=\sum_{\lambda_j<\lambda}\ket{\Phi_j}\bra{\Phi_j}.
\end{equation}
Then by the linearity of the trace, we have for all $\delta\in[-1/2,1/2]\setminus\{0\}$ 
\begin{align}
\begin{split}
&\qquad \sum_{\lambda_j<\lambda}\int_{X_{\lambda^\gamma}}\left|\Phi_j\right|^2\, \mathrm{d}\vol_g=\Tr\left(\Gamma_\lambda 1_{X_{\lambda^\gamma}}\right)\\
&=\frac{1}{\delta\lambda}\left(
\Tr\left(\Gamma_\lambda\left(\Delta_g+\delta\lambda 1_{X_{\lambda^\gamma}}-\lambda\right)\right)-
\Tr\left(\Gamma_\lambda\left(\Delta_g-\lambda\right)\right)
\right)\\
&=\frac{1}{\delta\lambda}\left(
\Tr\left(\Gamma_\lambda\left(\Delta_g+\delta\lambda 1_{X_{\lambda^\gamma}}-\lambda\right)\right)-
\Tr\left(\Delta_g-\lambda\right)_-
\right),
\end{split}
\end{align}
where we used in the last step that $\Gamma_\lambda$ is the projection on the negative spectral subspace of $\Delta_g-\lambda$. By the variational principle, we have
\begin{equation}
\Tr\left(\Gamma_\lambda\left(\Delta_g+\delta\lambda 1_{X_{\lambda^\gamma}}-\lambda\right)\right)\geqslant \Tr\left(\Delta_g+\delta\lambda 1_{X_{\lambda^\gamma}}-\lambda\right)_-.
\end{equation}
Thus, for $0<\delta<1/2$, we get
\begin{align}\label{eq:pos_delta_phi}
\begin{split}
\sum_{\lambda_j<\lambda}\int_{X_{\lambda^\gamma}}\left|\Phi_j\right|^2\, \mathrm{d}\vol_g&\geqslant\frac{1}{\delta\lambda}\left(
\Tr\left(\Delta_g+\delta\lambda 1_{X_{\lambda^\gamma}}-\lambda\right)_- -
\Tr\left(\Delta_g-\lambda\right)_-\right)\\
&= \frac{1}{\delta\lambda}\left(
\left|\Tr\left(\Delta_g-\lambda\right)_-\right|-\left|\Tr\left(\Delta_g+\delta\lambda 1_{X_{\lambda^\gamma}}-\lambda\right)_-\right|
\right).
\end{split}
\end{align}
and similarly, for $-1/2<\delta<0$, we have
\begin{equation}\label{eq:neg_delta_phi}
\lim_{\lambda\to\infty}\frac{1}{N(\lambda)}\sum_{\lambda_j<\lambda}\int_{X_{\lambda^\gamma}}\left|\Phi_j\right|^2\, \mathrm{d}\vol_g\leqslant
\frac{1}{\delta\lambda}\left(
\left|\Tr\left(\Delta_g-\lambda\right)_-\right|-\left|\Tr\left(\Delta_g+\delta\lambda 1_{X_{\lambda^\gamma}}-\lambda\right)_-\right|\right).
\end{equation}

\bigskip

Now, using \eqref{eq:pos_delta_phi} and Proposition \ref{prop:n_tr}, we get for all $0<\delta<1/2$
\begin{align}
\begin{split}
&\qquad\liminf_{\lambda\to\infty}\frac{1}{N(\lambda)}\sum_{\lambda_j<\lambda}\int_{X_{\lambda^\gamma}}\left|\Phi_j\right|^2\, \mathrm{d}\vol_g\\
&
\geqslant \liminf_{\lambda\to\infty}\frac{1}{N(\lambda)}
 \frac{1}{\delta\lambda}\left(
\left|\Tr\left(\Delta_g-\lambda\right)_-\right|-\left|\Tr\left(\Delta_g+\delta\lambda 1_{X_{\lambda^\gamma}}-\lambda\right)_-\right|
\right)\\
&= \frac{1}{\delta}\frac{2}{n+3}\left(1-(1-\delta)^{\frac{n+3}{2}}\right) 2(1 / 2+\gamma)
.
\end{split}
\end{align}
In particular, since the left-hand side does not depend on $\delta$, we can let $\delta\downarrow0$ and obtain
\begin{align}\label{eq:liminf_fin}
\begin{split}
&\qquad\liminf_{\lambda\to\infty}\frac{1}{N(\lambda)}\sum_{\lambda_j<\lambda}\int_{X_{\lambda^\gamma}}\left|\Phi_j\right|^2\, \mathrm{d}\vol_g\\
&\geqslant\lim_{\delta\downarrow0}\frac{1}{\delta}\frac{2}{n+3}\left(1-(1-\delta)^{\frac{n+3}{2}}\right) 2(1 / 2+\gamma)=2(1 / 2+\gamma).
\end{split}
\end{align}
Similarly, by \eqref{eq:neg_delta_phi}, Proposition \ref{prop:n_tr} and letting $\delta\uparrow0$, we also have
\begin{equation}\label{eq:limsup_fin}
\limsup_{\lambda\to\infty}\frac{1}{N(\lambda)}\sum_{\lambda_j<\lambda}\int_{X_{\lambda^\gamma}}\left|\Phi_j\right|^2\, \mathrm{d}\vol_g\leqslant 2(1 / 2+\gamma).
\end{equation}
Combining \eqref{eq:liminf_fin} and \eqref{eq:limsup_fin}, we obtain \eqref{eq:th_main}.
\end{proof}
At this point, we would like to remark that the idea of considering differences of traces and letting a small parameter $\delta\downarrow 0$ and $\delta\uparrow 0$, can also be found in \cite{read,larry}.

\bigskip

Next, we prove Theorem \ref{th:test_fct}. 

\begin{proof}[Proof of Theorem \ref{th:test_fct}]
Since $f$ continuous and bounded, we may approximate it in $L^\infty$ on $[-1/2,0]\times M$ by a finite sum of indicator functions on sets that have a product structure and are of the form $(-\infty,\gamma]\times \tilde M$, where $\gamma\leqslant0$ and $\tilde M$ is an open subset of $M$ with a piecewise smooth boundary. Due to linearity of both sides of \eqref{eq:th_test_fct}, it suffices to show \eqref{eq:th_test_fct} for test functions $f(x,y)=1_{(-\infty,\gamma]\times \tilde M}$. In the case $\tilde M=M$, this is precisely the statement of Theorem \ref{th:main}. For the general case, one decomposes $M$ into a union of $\tilde M$ with piecewise smooth boundaries and applies Dirichlet-Neumann bracketing for the variable $y\in M$. The proof for each $\tilde M$ follows precisely as the proof of Theorem \ref{th:main} with the only difference being the Dirichlet or Neumann boundary conditions at the boundary of $\tilde M$.
\end{proof}
\printbibliography
\end{document}